\newtheorem{theorem}{Theorem}[section]
\newtheorem{lemma}[theorem]{Lemma}
\newtheorem{proposition}[theorem]{Proposition}
\theoremstyle{remark}
\DeclareMathOperator{\Irr}{Irr}
\numberwithin{equation}{section}
\begin{document}

\title[Equivalent version of Huppert's conjecture for $K_3$-groups]{Equivalent version of Huppert's conjecture for $K_3$-groups}

\author[M.~Ghasemi]{Mohsen Ghasemi}
\address{Department of Mathematics, Urmia University,  Urmia  57135, Iran}
\email{m.ghasemi@urmia.ac.ir}

\author[S.~Hekmatara]{Somayeh Hekmatara}\address{Department of Mathematics, Urmia University,  Urmia  57135, Iran}
\email{somayehhekmatara93@gmail.com}

\thanks{}

\subjclass[2010]{05C15, 20D05}

\date{\today}


\keywords{finite group; irreducible character; codegree}

\begin{abstract}
For a character $\chi$ of a finite group $G$ the number ${\rm cod(\chi)}=|G:{\rm
Ker}(\chi)|/\chi(1)$ is called the codegree of $\chi$. In this note, we verify  the equivalent version of Huppert's conjecture for $K_3$-groups.
\end{abstract}

\maketitle
\section{Introduction}
Throughout this paper, $G$ will be a finite group and ${\rm
cd}(G)$ will be the set of the degrees of the complex irreducible
characters of $G$.  The number ${\rm cod(\chi)}=|G:{\rm
ker}(\chi)|/\chi(1)$ is called the codegree of $\chi$. Also the set
of codegrees of the irreducible characters of $G$ is denoted by
${\rm cod}(G)$. This definition for codegrees first appeared in
\cite{QWW}, where the authors use a graph-theoretic approach to
compare the structure of a group with its set of codegrees.
In \cite{ABGGH, AGG, BA, SAKh} some properties of the codegrees of irreducible characters of finite groups have been studied.

In $1990$, Huppert proposed the following conjecture:\\
{\bf Huppert's Conjecture.} Let $H$ be any finite non-abelian simple group and $G$ a finite
group such that ${\rm cd}(G) = {\rm cd}(H)$. Then, $G\cong H \times A$, where $A$ is abelian.

Many people were devoted to the study of this problem. An analogues of Huppert’s
conjecture can be proposed and studied for any set of integers related to a finite group.
For instance, a dual version of Huppert’s conjecture for the set of conjugacy class sizes is
considered. For more results see  \cite{A, AKMT, AK}.  In this paper, we are concerned with the following conjecture, inspired by Huppert's
conjecture:\\
{\bf Conjucture:} Let $G$ be a finite group and $H$ a non-abelian simple group. If ${\rm cod}(G) =
{\rm cod}(H)$, then $G \cong H$. \\
This conjecture first proposed in \cite{BAK}, where the authors  verified the above conjecture for all projective special linear groups
of degree $2$.  In this article by using the same method in \cite{BAK}, we verify this conjecture for  $K_3$-groups. Also we recall that Huppert's conjecture is true for $K_3$-groups (see \cite{H}).

We refer to \cite{Isa} for the notation of character theory of finite groups.  If $L\leqslant G$ and $\theta\in\Irr(L)$, then we write $\Irr(G \mid \theta)=\{\chi\in\Irr(G) \mid [\chi_L, \theta]\neq 0\}$. Moreover, ${\rm Irr}(G \mid L)={\rm Irr}(G)-{\rm Irr}(G/L)$.

\section{Preliminaries}
In this section we give some results which will be used later in our proofs.

\begin{proposition}{\rm\cite[ Lemma 2]{MM}}\label{f1}
Let $S$ be a non-abelian finite simple group. Then there exists $1_s \neq \theta \in {\rm Irr}(S)$ that extends to ${\rm Aut}(S)$.

\end{proposition}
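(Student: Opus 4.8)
The plan is to argue by the Classification of Finite Simple Groups, treating separately the three families: alternating groups, sporadic groups, and simple groups of Lie type. In each case the goal is to exhibit a single nontrivial $\theta\in\Irr(S)$ that is fixed by every automorphism of $S$ and for which the obstruction to extending from $S$ to $\Aut(S)$ vanishes. Since $\operatorname{Inn}(S)\cong S$ is the socle of $\Aut(S)$ and $\Aut(S)/\operatorname{Inn}(S)=\Out(S)$, extendibility is governed by the action on $\theta$ together with a cohomological (determinantal) obstruction in the sense of the Gallagher--Isaacs theory; invariance alone is the easy part, typically obtained by pinning $\theta$ down as the unique irreducible character of its degree.

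For the alternating groups $\Alt_n$ with $n\geq 5$ and $n\neq 6$ one has $\Aut(\Alt_n)=\Sym_n$. Choosing a partition $\lambda$ of $n$ that is not self-conjugate and is distinct from $(n)$ and $(1^n)$ --- for instance $\lambda=(n-1,1)$ --- the character $\chi_\lambda\in\Irr(\Sym_n)$ restricts irreducibly to $\Alt_n$, so $\theta:=(\chi_\lambda)|_{\Alt_n}$ is nontrivial and $\chi_\lambda$ itself furnishes an explicit extension of $\theta$ to $\Aut(\Alt_n)$. The group $\Alt_6$, whose outer automorphism group is larger, is handled separately from its character table, where a character invariant under all of $\Aut(\Alt_6)$ can be read off and checked to extend.

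For groups of Lie type in defining characteristic $p$ the natural candidate is the Steinberg character $\St$, of degree $|S|_p$. Being the unique irreducible character of that degree, $\St$ is automatically invariant under all field, diagonal, and graph automorphisms, and the classical fact that $\St$ extends to $\Aut(S)$ then supplies the required $\theta=\St\neq 1_S$. The sporadic groups, being finitely many with $\Out(S)\leq 2$, are dispatched by direct inspection of their character tables, selecting in each case a nontrivial character fixed by the (at most one) outer automorphism and verifying that it extends.

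The main obstacle is that $\Aut(S)$-invariance of $\theta$ does not by itself guarantee extendibility: one must verify the vanishing of the relevant obstruction in $H^2(\Out(S),\mathbb{C}^\times)$. This is where the delicate cases concentrate --- the exceptional graph automorphisms, such as the triality of $D_4(q)$, together with the extra automorphisms arising for the small groups $\Alt_6$, $\Sz(2)'$, $G_2(2)'$ and the like --- and it is precisely for these that appealing to the established extendibility of the Steinberg character, rather than reproving it, keeps the argument uniform and avoids a case-by-case cohomological computation.
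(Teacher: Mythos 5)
The paper gives no proof of this proposition at all---it is quoted verbatim from \cite[Lemma 2]{MM}---and your sketch (CFSG, the degree $n-1$ character restricted from $\Sym_n$ for alternating groups, the Steinberg character via Feit--Schmid for Lie type, Atlas inspection for $\Alt_6$ and the sporadics) is precisely the standard argument found in that source, so you are taking essentially the same route. The only small imprecision is that invariance of the Steinberg character under $\Aut(S)$ is better justified by its intrinsic construction from the parabolic subgroups than by uniqueness of its degree, which is not needed.
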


\begin{proposition}{\rm\cite[ Lemma 5]{BCLP}}\label{f2}
Let $N$ be a minimal normal subgroup of $G$ such that $N=S_1 \times S_2 \times \cdots \times S_t$, where $S_i \cong S$ is a non-abelian simple group. If $\theta \in {\rm Irr}(S)$ extends to ${\rm Aut(S)}$ then $\theta \times \theta \times \cdots \times \theta \in {\rm Irr}(N)$ extends to $G$.

\end{proposition}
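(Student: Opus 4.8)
The plan is to produce the extension explicitly, by embedding the conjugation action of $G$ on $N$ into a wreath product and pulling back a natural tensor-power representation. Write $\Theta=\theta\times\cdots\times\theta\in\Irr(N)$ for the character to be extended, let $\hat\theta\in\Irr(\Aut(S))$ be an extension of $\theta$, and fix a representation $\rho\colon\Aut(S)\to\mathrm{GL}(V)$ affording $\hat\theta$. The first step is structural. Since $S$ is non-abelian simple, $Z(N)=1$, so the conjugation action of $G$ on $N$ has kernel $C=C_G(N)$ and yields an embedding $G/C\hookrightarrow\Aut(N)$. Because $N$ is a direct power of a non-abelian simple group, $\Aut(N)\cong\Aut(S)\wr\mathfrak{S}_t$, with the base group $\Aut(S)^t$ acting factorwise and $\mathfrak{S}_t$ permuting the factors; under this identification the image of $N$ is $\mathrm{Inn}(N)\cong\mathrm{Inn}(S)^t$, which lies in the base group $\Aut(S)^t$.

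The heart of the argument is to extend $\Theta$ to the whole wreath product $W=\Aut(S)\wr\mathfrak{S}_t$. For this I would use the classical tensor-power construction: define $\tilde\rho\colon W\to\mathrm{GL}(V^{\otimes t})$ by letting the base group act diagonally, $\tilde\rho(\alpha_1,\dots,\alpha_t)=\rho(\alpha_1)\otimes\cdots\otimes\rho(\alpha_t)$, and letting $\pi\in\mathfrak{S}_t$ permute the tensor factors, $\tilde\rho(\pi)(v_1\otimes\cdots\otimes v_t)=v_{\pi^{-1}(1)}\otimes\cdots\otimes v_{\pi^{-1}(t)}$. A direct check that these two prescriptions respect the relation $\pi(\alpha_1,\dots,\alpha_t)\pi^{-1}=(\alpha_{\pi^{-1}(1)},\dots,\alpha_{\pi^{-1}(t)})$ shows that $\tilde\rho$ is a genuine representation of $W$. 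Its restriction to $\Aut(S)^t$ affords $\hat\theta\times\cdots\times\hat\theta$, and since $\rho$ restricted to $S\cong\mathrm{Inn}(S)$ affords $\theta$, the restriction of $\tilde\rho$ to $\mathrm{Inn}(N)\cong N$ affords precisely $\Theta$. In particular $\tilde\rho$ has degree $\Theta(1)$ and restricts irreducibly to $N$, so it is an irreducible extension of $\Theta$ to $W$.

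To conclude, restrict $\tilde\rho$ along $G/C\hookrightarrow W$ to obtain a character $\bar\chi$ of $G/C$ whose restriction to $\bar N=NC/C$ equals $\Theta$ under the isomorphism $\bar N\cong N$; since this restriction is irreducible, so is $\bar\chi$. Inflating $\bar\chi$ through $G\to G/C$ gives $\chi\in\Irr(G)$ with $C\le\ker\chi$, and for $n\in N$ one has $\chi(n)=\bar\chi(nC)=\Theta(n)$ together with $\chi(1)=\Theta(1)$, so $\chi$ extends $\Theta$ to $G$, as required. The one genuinely nontrivial point is the middle step: verifying that the diagonal action and the factor-permutation action glue into a well-defined representation of $W$ (equivalently, that $V^{\otimes t}$ is a module for the wreath product with the permutation action compatible with twisted conjugation in the base group). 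The remaining ingredients—the identification $\Aut(N)\cong\Aut(S)\wr\mathfrak{S}_t$ and the passage modulo $C_G(N)$—are standard.
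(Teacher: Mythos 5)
The paper does not prove this statement; it is quoted verbatim from \cite[Lemma 5]{BCLP} as a preliminary, so there is no in-paper proof to compare against. Your argument is correct and is essentially the standard proof from that reference: pass to $G/C_G(N)$ (legitimate since $N\cap C_G(N)=Z(N)=1$), identify $\Aut(N)$ with $\Aut(S)\wr\mathfrak{S}_t$, extend $\theta\times\cdots\times\theta$ to the whole wreath product by the tensor-power construction, and restrict back; all the steps you flag as needing verification (the compatibility of the permutation action with the base-group action, and the identification of $\mathrm{Inn}(N)$ inside the base group) do check out.
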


\begin{proposition}{\rm\cite[ Theorem C]{MM}}\label{d1}
Let $G$ be a nonabelian finite simple group. Then either $|cd(G)| \geq 8$ or one of the following cases holds.
\begin{description}
\item[(i)]  $|{\rm cd}(G)|=4$ and $G={\rm PSL(2, 2^f)}$, where $f \geq 2$.
\item[(ii)]  $|{\rm cd}(G)|=5$ and $G={\rm PSL(2, q)}$, where $q=p^f$ , $p \neq 2$ and $p^f > 5$. 
\item[(iii)] $|{\rm cd}(G)|=6$  and  $G={\rm PSL}(3,4)$ or $G/N=\hskip 0.1 cm^2B_2(2^{2f+1})$, where $f \geq 1$ .
\item[(iv)]  $|{\rm cd}(G)|=7$  and $G={\rm PSL}(3,3)$, $A_7$, $M_{11}$ or $J_1$.
\end{description}

\end{proposition}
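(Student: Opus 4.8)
The plan is to establish this classification via the Classification of Finite Simple Groups, treating the alternating, sporadic, and Lie-type families in turn. The guiding principle is that possessing few irreducible character degrees is a severe restriction: it forces a simple group of Lie type to have small rank and to be defined over a small field, so the bound $|\cd(G)| \leq 7$ should leave only finitely many exceptional groups together with the low-rank infinite families appearing in (i)--(iv).

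First I would dispose of the alternating and sporadic groups. For the alternating groups one shows that $|\cd(A_n)|$ grows with $n$ and exceeds $7$ once $n$ is moderately large; the remaining small cases are finite in number, with $A_5 \cong \PSL(2,4) \cong \PSL(2,5)$ and $A_6 \cong \PSL(2,9)$ reappearing among the $\PSL(2,q)$ and $A_7$ contributing the entry $|\cd(A_7)| = 7$ of (iv). For the $26$ sporadic groups one reads off the number of distinct degrees from the known character tables; only $M_{11}$ and $J_1$ give $|\cd| = 7$, while all others give $|\cd| \geq 8$.

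The substantive part is the groups of Lie type, which I would organize by Lie rank using Deligne--Lusztig theory. The irreducible characters split into Lusztig series indexed by the semisimple classes of the dual group, and within each series the degrees are given by explicit monic polynomials in $q$: the unipotent degrees, governed by the Weyl-group data, together with the semisimple and mixed degrees. The number of distinct such degree-polynomials grows with both the rank and the field size, so that for all groups of rank $\geq 3$, and for rank-$2$ groups over all but the smallest fields, one obtains $|\cd(G)| \geq 8$. This reduces the problem to a finite list of low-rank candidates: the rank-$1$ groups $\PSL(2,q)$, the Suzuki groups ${}^2B_2(2^{2f+1})$ and the Ree groups ${}^2G_2(3^{2f+1})$, together with the type-$A_2$ groups $\PSL(3,q)$, $\PSU(3,q)$ and the remaining rank-$2$ types $B_2$, $G_2$, ${}^2F_4$ over small $q$.

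These remaining candidates are settled using their explicit generic degree tables. For $\PSL(2,q)$ one finds $|\cd| = 4$ when $q$ is even and $|\cd| = 5$ when $q$ is odd (taking $q > 5$ to exclude the non-simple and coincidental small cases), giving (i) and (ii); the Suzuki groups yield $|\cd| = 6$, the second alternative of (iii); and direct enumeration of degrees eliminates the Ree groups, $\PSU(3,q)$, and the types $B_2$, $G_2$, ${}^2F_4$ by exhibiting $|\cd| \geq 8$. Finally, counting distinct degrees in the generic table of $\PSL(3,q)$ gives $|\cd| \geq 8$ for $q \geq 5$, leaving $\PSL(3,3)$ with $|\cd| = 7$ (in (iv)) and $\PSL(3,4)$ with $|\cd| = 6$ (in (iii)). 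I expect the main obstacle to be precisely this low-rank bookkeeping: over small fields the generic degree polynomials coincide or factor unexpectedly, and the passage between the simply connected group and its simple quotient (the central-quotient and outer-diagonal subtleties) must be tracked carefully, since it is this failure of genericity that produces the exceptional entries of (i)--(iv).
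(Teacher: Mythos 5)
This proposition is not proved in the paper at all: it is imported verbatim as \cite[Theorem C]{MM} (Malle--Moret\'o), so there is no internal proof to compare your argument against. Judged on its own terms, your outline follows essentially the same strategy as that source --- CFSG, then a case division into alternating, sporadic, and Lie-type groups, with the Lie-type case controlled by rank and field size --- and the specific degree counts you quote check out: $|\cd(A_7)|=|\cd(M_{11})|=|\cd(J_1)|=|\cd(\PSL(3,3))|=7$, $|\cd(\PSL(3,4))|=6$, $|\cd({}^2B_2(2^{2f+1}))|=6$, and $4$ resp.\ $5$ for $\PSL(2,q)$ with $q$ even resp.\ odd. What you have, though, is a correct plan rather than a proof: the assertions that every Lie-type group of rank $\geq 3$, and every rank-$2$ group outside your short list of small-field exceptions, satisfies $|\cd(G)|\geq 8$ are exactly the substantive content, and they are stated, not established. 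In particular the unipotent characters alone do not suffice in rank $2$ (type $B_2$ has only $6$ unipotent characters, $G_2$ has $10$ but with coincidences possible over small fields), so one must bring in semisimple and mixed-series degrees and handle the central-quotient issues you flag; you correctly identify this as the main obstacle but do not carry it out. You should also make explicit that $\PSL(3,2)\cong\PSL(2,7)$ and $\PSU(3,3)$ (with $|\cd|=8$) are accounted for, since these are the borderline small cases. If the intent is merely to justify citing the result, the outline is adequate; as a standalone proof it has a genuine gap at the low-rank Lie-type enumeration.
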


{\bf Remark.} By \cite{CCNPW} we see that ${\rm cod}(U_3(3))=\{ 1, 2^4.3^2.7, 2^5.3^3, 2^4.3^3, 2^5.3^2, 2^5.7, 2^3.3^3, 3^3.7\}$. Also ${\rm cod}(U_4(2))=\{1, 2^6.3^4, 2^5.3^3.5, 2^5.3^4, 2^6.3^3, 2^4.3^4,
 2^3.3^3.5, 2^5.3^3, 2^3.3^4, 2^6.3^2, 2^4.3^3, 3^4.5, 2^6.5\}.$

By ~\cite[Lemma 2.1]{QWW} we have the following result.\\
\begin{proposition}\ \  \label{NC1}
Let $\chi \in {\rm Irr}(G)$.
\begin{itemize}
\item[(i)] For any $N\unlhd G$ with $N\leq {\rm ker(\chi)}$, $\chi$ may be viewed as an irreducible character of $G/N$.
 The codegree ${\rm cod(\chi)}$ of $\chi$ is the same whenever $\chi$ is seen as an irreducible character of $G$ or
 $G/N$. Furthermore, ${\rm cod(\chi)}$ is independent of the choice
 of such $N$. 
\item[(ii)] If $M$ is a subnormal subgroup of $G$ and $\varphi$ is an irreducible constituent of $\chi_M$, then ${\rm cod(\varphi)}$ divides ${\rm cod(\chi)}$.
\end{itemize}
\end{proposition}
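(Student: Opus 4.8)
The plan is to prove the two parts separately, treating (i) as elementary and reserving the real work for (ii), inside which part (i) will itself be used as a reduction device.

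For (i), the argument is routine. Since $N\unlhd G$ with $N\le\ker\chi$, the representation affording $\chi$ is trivial on $N$, hence factors through $G/N$ and yields an irreducible character $\bar\chi\in\Irr(G/N)$ with $\bar\chi(1)=\chi(1)$ and $\ker\bar\chi=\ker\chi/N$. I would then just compute ${\rm cod}(\bar\chi)=|G/N:\ker\chi/N|/\bar\chi(1)=|G:\ker\chi|/\chi(1)={\rm cod}(\chi)$, the index being handled by the third isomorphism theorem. Independence of the choice of $N$ is then immediate, since the value $|G:\ker\chi|/\chi(1)$ refers only to $\ker\chi$ and $\chi(1)$.

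For (ii), I would first reduce to a normal subgroup. A subnormal $M$ lies in a chain $M=M_0\unlhd M_1\unlhd\cdots\unlhd M_n=G$, and given $\varphi\mid\chi_M$ one can choose constituents $\chi=\eta_n,\eta_{n-1},\dots,\eta_0=\varphi$ with $\eta_i\in\Irr(M_i)$ an irreducible constituent of $(\eta_{i+1})_{M_i}$; since divisibility is transitive, it suffices to treat one normal step $M\unlhd G$. For normal $M$ I would route through the Clifford correspondent: let $I=I_G(\varphi)$ and $\psi\in\Irr(I\mid\varphi)$ with $\psi^G=\chi$, so $\chi(1)=|G:I|\psi(1)$. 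The first half is clean, because the kernel of an induced character is a core: $\ker\chi=\ker(\psi^G)=\bigcap_{g\in G}(\ker\psi)^g=:C$, and a direct computation gives ${\rm cod}(\chi)/{\rm cod}(\psi)=|\ker\psi:C|$, a positive integer. Hence ${\rm cod}(\psi)\mid{\rm cod}(\chi)$.

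It remains to show ${\rm cod}(\varphi)\mid{\rm cod}(\psi)$, where now $\psi_M=e\varphi$ is homogeneous with $e=\psi(1)/\varphi(1)$ and $\varphi$ is $I$-invariant. Here I would use part (i) to strip kernels: since $\ker\psi\cap M=\ker\varphi$ is $I$-invariant and lies in both $\ker\psi$ and $\ker\varphi$, quotienting first by $\ker\varphi$ and then by the image of $\ker\psi$ reduces us, without changing either codegree, to the situation $\ker\psi=\ker\varphi=1$. In that case ${\rm cod}(\psi)/{\rm cod}(\varphi)=|I:M|/e$, so the whole statement comes down to the divisibility $e\mid|I:M|$. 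This is the step I expect to be the main obstacle: it is exactly the assertion that the degree of an irreducible projective representation of $I/M$ divides $|I/M|$ (equivalently, lifting to a Schur representation group $\widehat{I/M}$ with central kernel $A$, the ordinary character degree $e$ divides $|\widehat{I/M}:Z|$, which in turn divides $|\widehat{I/M}:A|=|I/M|$). Granting $e\mid|I:M|$, the ratio is a positive integer, and combining the two halves yields ${\rm cod}(\varphi)\mid{\rm cod}(\psi)\mid{\rm cod}(\chi)$, which finishes (ii).
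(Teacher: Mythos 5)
The paper does not actually prove this proposition; it is quoted from \cite[Lemma 2.1]{QWW}, so there is no in-house argument to measure you against. Your proof is correct, and it is essentially the standard (and the cited source's) argument: part (i) is the routine computation you describe, and for part (ii) the reduction along a subnormal chain to a single normal step, followed by the two-stage comparison through the Clifford correspondent $\psi\in{\rm Irr}(I_G(\varphi)\mid\varphi)$, is exactly the right skeleton. Your first half is clean: $\ker(\psi^G)={\rm core}_G(\ker\psi)$ gives ${\rm cod}(\chi)/{\rm cod}(\psi)=|\ker\psi:{\rm core}_G(\ker\psi)|\in\mathbb{Z}$. You also correctly isolate the only genuinely nontrivial input, namely that the ramification index $e=\psi(1)/\varphi(1)$ of an invariant character divides the index of the normal subgroup; this is \cite[Corollary 11.29]{Isa}, so you may simply cite it instead of rebuilding the Schur-cover argument. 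Two small points of care: (a) without the kernel reduction the exact ratio is ${\rm cod}(\psi)/{\rm cod}(\varphi)=|I:M\ker\psi|/e$ rather than $|I:M|/e$, since $|M:\ker\varphi|=|M\ker\psi:\ker\psi|$; your reduction (passing to $I/\ker\psi$, where $M$ is replaced by $M\ker\psi/\ker\psi$) makes the two agree and lets Corollary 11.29, applied in the quotient, give the needed divisibility, but you should state the formula in the reduced groups. (b) The first quotient is legitimate because $\ker\varphi=\ker\psi\cap M$ is normal in $I$, which uses the $I$-invariance of $\varphi$; it is worth saying so explicitly. With those two sentences added, the argument is complete.
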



\section{main results}
We recall that if $G$ is a finite simple $K_3$-group, then $G$ is isomorphic to one of the following
groups:\\
$$A_5, A_6, {\rm L_2(7)}, {\rm L_2(8)}, {\rm L_2(17)}, {\rm L_3(3)},
{\rm U_3(3)}, {\rm U_4(2)}.$$
 Also by \cite{BAK}, we see that the conjecture is true for $A_5, A_6, {\rm L_2(7)}, {\rm L_2(8)}, {\rm L_2(17)}, {\rm L_3(3)}$. Thus in this section we just verify the conjecture for ${\rm U_3(3)}$ and ${\rm U_4(2)}$. 
\begin{theorem}\label{a1}
Suppose that $G$ is a group  and ${\rm cod}(G)={\rm cod}(U_3(3))$. If $N$ is a maximal normal subgroup of $G$ then $G/N \cong  U_3(3)$.

\end{theorem}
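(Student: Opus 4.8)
The plan is to identify the simple quotient $S := G/N$ directly from its codegree set, using that $N$ maximal normal forces $S$ simple, and then pinning $S$ down via its prime divisors and the classification of simple $K_3$-groups recalled at the start of this section.

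First I would dispose of the abelian case. If $S$ were cyclic of prime order $p$, then $\mathrm{cod}(S)=\{1,p\}$, and by Proposition~\ref{NC1}(i) every character of $G/N$ inflates to a character of $G$ with the same codegree, so $\mathrm{cod}(G/N)\subseteq\mathrm{cod}(G)=\mathrm{cod}(U_3(3))=\{1,189,216,224,288,432,864,1008\}$; this would force $p$ to be one of $189,216,224,288,432,864,1008$, none of which is prime. Hence $S$ is a non-abelian simple group with $\mathrm{cod}(S)\subseteq\mathrm{cod}(U_3(3))$. Since $S$ is simple, each nontrivial $\chi\in\Irr(S)$ has $\mathrm{ker}(\chi)=1$, so $\mathrm{cod}(\chi)=|S|/\chi(1)$; thus $\chi(1)\mapsto|S|/\chi(1)$ is a bijection and $|\mathrm{cd}(S)|=|\mathrm{cod}(S)|\le 8$.

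The key step is to determine $\pi(S)$. Every element of $\mathrm{cod}(S)$ is a $\{2,3,7\}$-number, so $\pi(\mathrm{cod}(S))\subseteq\{2,3,7\}$, and I claim $\pi(S)=\pi(\mathrm{cod}(S))$. Suppose a prime $p$ divided $|S|$ but no codegree. Then for every nontrivial $\chi\in\Irr(S)$ we would have $p\nmid|S|/\chi(1)$, so the full $p$-part of $|S|$ divides $\chi(1)$; in particular $p\mid\chi(1)$ for all $\chi\neq 1_S$. Reducing $|S|=1+\sum_{\chi\neq 1_S}\chi(1)^2$ modulo $p^2$ then gives $|S|\equiv 1\pmod{p^2}$, contradicting $p\mid|S|$. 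Hence $\pi(S)\subseteq\{2,3,7\}$, while Burnside's $p^aq^b$-theorem forbids a non-abelian simple group with fewer than three prime divisors, so $\pi(S)=\{2,3,7\}$ and $S$ is a simple $K_3$-group. Comparing with the list recalled above, the only simple $K_3$-groups whose order involves exactly the primes $2,3,7$ are $\mathrm{L_2(7)}$, $\mathrm{L_2(8)}$ and $U_3(3)$.

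It remains to eliminate the first two. Using the character degrees from \cite{CCNPW} one computes $\mathrm{cod}(\mathrm{L_2(7)})=\{1,21,24,28,56\}$ and $\mathrm{cod}(\mathrm{L_2(8)})=\{1,56,63,72\}$, and neither is contained in $\mathrm{cod}(U_3(3))$ (for instance $21,24\notin\mathrm{cod}(U_3(3))$ and $56,63\notin\mathrm{cod}(U_3(3))$). Therefore $S\cong U_3(3)$, that is $G/N\cong U_3(3)$. The main obstacle is the middle paragraph: ruling out any prime outside $\{2,3,7\}$ dividing $|S|$; once that is settled, the classification of simple $K_3$-groups together with a short codegree comparison finishes the proof. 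As an alternative to the prime argument one could feed the bound $|\mathrm{cd}(S)|\le 8$ into Proposition~\ref{d1} and eliminate the resulting small families case by case, but the order/prime reduction is cleaner and self-contained.
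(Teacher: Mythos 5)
Your proposal is correct, but it takes a genuinely different route from the paper's proof. The paper reduces to $|{\rm cd}(G/N)|\le 8$, invokes Proposition~\ref{d1} (Malle--Moret\'o) to list the possible simple quotients, and then eliminates the infinite families ${\rm PSL}(2,2^f)$, ${\rm PSL}(2,q)$ with $q$ odd and $^2B_2(2^{2f+1})$ together with ${\rm PSL}(3,4)$, ${\rm PSL}(3,3)$, $A_7$, $M_{11}$, $J_1$ one by one by exhibiting a codegree outside ${\rm cod}(U_3(3))$, handling the case $|{\rm cd}(G/N)|\ge 8$ via Huppert's verified conjecture for $U_3(3)$. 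You instead determine the prime spectrum of $S=G/N$: since $S$ is simple, every nontrivial $\chi$ is faithful and ${\rm cod}(\chi)=|S|/\chi(1)$, and your argument that $\pi(S)=\pi({\rm cod}(S))$ --- if some $p\in\pi(S)$ divided no codegree then $p$ would divide every nontrivial degree, whence $|S|=1+\sum_{\chi\ne 1_S}\chi(1)^2\equiv 1\pmod p$, a contradiction --- is sound. Combined with Burnside's $p^aq^b$-theorem this forces $\pi(S)=\{2,3,7\}$, so by the classification of simple $K_3$-groups quoted at the start of Section~3 only $L_2(7)$, $L_2(8)$ and $U_3(3)$ survive, and your codegree computations $\{1,21,24,28,56\}$ and $\{1,56,63,72\}$ correctly rule out the first two. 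Your route buys brevity and avoids both the infinite-family checks and the appeal to Huppert's conjecture, at the cost of leaning on the $K_3$-classification (which the paper already assumes) rather than on Proposition~\ref{d1}; the paper's case analysis, while longer, is the template it reuses for $U_4(2)$, where it works through all Lie types rather than through the prime spectrum.
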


\begin{proof}
By our assumption $G/N$ is a simple group. If $G/N$ is abelian then $G^{'} \leq N$ and so $G \neq G^{'}$. Now by Proposition~\ref{NC1}, ${\rm cod}(G/G^{'}) \subseteq {\rm cod}(G)$ and so ${\rm cod}(G)$ contains  a prime power, a contradiction. Thus we may suppose that $G/N$ is nonabelian simple group. If $|{\rm cd}(G/N)| \geq 8$ then by ${\rm cod}(G/N) \subseteq {\rm cod}(G)$ and ${\rm cod}(G)={\rm cod}(U_3(3))$ we see that $|{\rm cod}(G/N)|=|{\rm cod}(U_3(3))|=8$. Thus  $|{\rm cd}(G/N)|=|{\rm cd}(U_3(3))|$ and by Huppert's conjecture $G/N \cong U_3(3)$.  Thus we may suppose that  $|{\rm cd}(G/N)| < 8$ and we find contradiction.  Now by Proposition~\ref{d1}, we considering the following cases.\\
{\bf Case 1.} $|{\rm cd}(G/N)|=4$ and $G/N \cong {\rm PSL(2, 2^f)}$, where $f \geq 2$.\\
By \cite{W}, we know that ${\rm cod}({\rm PSL(2, 2^f)})=\{1, q(q-1), q(q+1), q^2+1 \}$, where $q=2^f$ and ${\rm cod}(U_3(3))=\{1, 2^4.3^2.7, 2^5.3^3, 2^4.3^3, 2^5.3^2, 2^5.7, 2^3.3^3, 3^3.7 \}$. It is easy to see that ${\rm cod}({\rm PSL(2, 2^f)}) \nsubseteq {\rm cod}(U_3(3))$, a contradiction.\\
{\bf Case 2.} $|{\rm cd}(G/N)|=5$ and $G/N \cong {\rm PSL(2, q)}$, where $q=p^f$,  $p \neq 2$ and $q> 5$.\\
 By \cite{W}, we know that  ${\rm cod(PSL(2, q))} = \{1, q(q-1)/2, q(q+1)/2, (q^2-
1)/2,  q(q-\epsilon(q))\},$ where $\epsilon(q) = (-1)^{(q-1)/2}$.
It is easy to see that ${\rm cod}({\rm PSL(2, p^f)}) \nsubseteq {\rm cod}(U_3(3))$, a contradiction. \\
{\bf Case 3.} $|{\rm cd}(G/N)|=6$  and   $G/N={\rm PSL}(3,4)$ or $G/N=\hskip 0.1 cm ^2B_2(2^{2f+1})$, where $f \geq 1$.\\
First suppose that $G/N={\rm PSL}(3,4)$. We know that ${\rm cd}({\rm PSL}(3,4))=\{1, 20, 35, 45, 63, 64\}$ and so ${\rm cod}({\rm PSL}(3,4))=\{1, 2^4.3^2.7, 2^6.3^2, 2^6.7, 2^6.5, 3^2.5.7\}$. Now $2^6.5 \in {\rm cod}({\rm PSL}(3,4))$ and so $2^6.5 \notin {\rm cod}(U_3(3))$, a contradiction. Thus we may suppose  $G/N=\hskip 0.1 cm ^2B_2(2^{2f+1})$,  where $q^2=2^{2f+1}$, $r=2^{f+1}$ and $f \geq 1$.  We know that ${\rm cd}(^2B_2(2^{2f+1}))=\{1, q^4, q^4+1, (q^2-r+1)(q^2-1), (q^2+r+1)(q^2-1), r(q^2-1)/2\}$ and so ${\rm cod}(^2B_2(2^{2f+1}))=\{1, (q^4+1)(q^2-1), q^4(q^2-1), \frac{q^4(q^4+1)}{q^2-r+1}, \frac{q^4(q^4+1)}{q^2+r+1}, \frac{2q^4(q^4+1)}{r}\}$. Now with the simple check we can see that ${\rm cod}(^2B_2(2^{2f+1}))\nsubseteq {\rm cod}(U_3(3))$, a contradiction.\\
{\bf Case 4.} $|{\rm cd}(G/N)|=7$  and $G/N={\rm PSL}(3,3)$, $A_7$, $M_{11}$ or $J_1$.\\
We know that ${\rm cd}({\rm PSL}(3,3))=\{1, 12, 13, 16, 26, 27, 39\}$ and so ${\rm cod}({\rm PSL}(3,3))=\{1, 2^2.3^2.13,\\ 2^4.3^3, 3^3.13,
 2^3.3^3, 2^4.13, 2^4.3^2\}$. Now it is easy to see that ${\rm cod}({\rm PSL}(3,3)) \nsubseteq {\rm cod}(U_3(3))$. Also if $G/N \cong A_7$ then ${\rm cd}(A_7)=\{1, 6, 10, 14, 15,  21, 35\}$ and so ${\rm cod}(A_7)=\{1, 2^2.3.5.7, 2^2.3^2.7,\\ 2^2.3^2.5, 2^3.3.7, 2^3.3.5, 2^3.3^2\}$. Again we see that ${\rm cod}(A_7) \nsubseteq {\rm cod}(U_3(3))$. If $G/N \cong M_{11}$ then ${\rm cd}(M_{11})=\{1, 10, 11, 16, 44, 45, 55 \}$.  If $\chi(1)=16$ then ${\rm cod}(\chi(1))=3^2.5.11$ and so $3^2.5.11 \in {\rm cod}(U_3(3))$, a contradiction.  Fially if $G/N \cong J_1$ then ${\rm cd}(J_1)=\{1, 56, 76, 77, 120, 133, 209 \}$. If $\chi(1)=56$ then ${\rm cod}(\chi(1))=3.5.11.19$ and so $3.5.11.19 \in {\rm cod}(U_3(3))$, a contradiction.\\

\end{proof}

\begin{lemma}\label{b1}
Suppose that $G$ is a simple  group which is  isomorphic to alternating group $A_n$ $(n \geq 5)$ or sporadic group. Then  $4 \leq |{\rm cod}(G)|  \leq 12$ if and oly if $G$ is isomorphic to one of the following groups.  $$A_5, A_6,  A_7,  A_8,  M_{11},  M_{12},  M_{22},  M_{23},  J_1.$$

\end{lemma}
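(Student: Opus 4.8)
The plan is to translate the codegree count into a character-degree count. Since $G$ is nonabelian simple, every nontrivial $\chi\in\Irr(G)$ has $\ker(\chi)=1$, so ${\rm cod}(\chi)=|G|/\chi(1)$, while ${\rm cod}(1_G)=1$. As the map $d\mapsto|G|/d$ is injective, distinct character degrees correspond to distinct codegrees and conversely, so that $|{\rm cod}(G)|=|\cd(G)|$. Hence the lemma is equivalent to classifying the alternating and sporadic simple groups with $4\le|\cd(G)|\le 12$. Because every nonabelian simple group already satisfies $|\cd(G)|\ge 4$ by Proposition~\ref{d1}, the lower bound is automatic, and only the constraint $|\cd(G)|\le 12$ has to be analysed.

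The sporadic groups and the small alternating groups form a finite check against the known character tables. From \cite{CCNPW} one reads off $|\cd(M_{11})|=7$, $|\cd(M_{12})|=11$, $|\cd(M_{22})|=10$, $|\cd(M_{23})|=11$ and $|\cd(J_1)|=7$, all at most $12$, while every other sporadic group (starting with $M_{24}$, $J_2$ and $HS$) has more than $12$ distinct degrees. For the alternating groups I would apply the hook-length formula together with the branching rule for $S_n\downarrow A_n$: a character indexed by a non-self-conjugate partition restricts irreducibly, whereas a self-conjugate partition splits into two characters of equal degree. This yields $|\cd(A_5)|=4$, $|\cd(A_6)|=5$, $|\cd(A_7)|=7$, $|\cd(A_8)|=11$ and $|\cd(A_9)|=16$, so that $A_9$ is already excluded.

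The real difficulty lies in the infinite tail $A_n$ with $n\ge 9$, for which $|\cd(A_n)|$ must be bounded below uniformly. For this I would use the hook partitions $[n-k,1^{k}]$, whose $S_n$-degrees are the binomial coefficients $\binom{n-1}{k}$; the strictly increasing values $\binom{n-1}{0}<\binom{n-1}{1}<\dots<\binom{n-1}{\lfloor(n-1)/2\rfloor}$ supply about $n/2$ distinct degrees. At most one such hook (when $n$ is odd) is self-conjugate, so for $n\ge 9$ all but at most one of these integers occurs unchanged as a degree of $A_n$, giving $|\cd(A_n)|\ge\lfloor(n-1)/2\rfloor$; this exceeds $12$ once $n$ is large, and incorporating further families such as the two-row shapes lowers the threshold and so shortens the finite list of remaining $n$. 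Those intermediate values would be settled by the same hook-length-plus-branching computation used for $A_8$ and $A_9$. The step demanding the most care is the behaviour of the restriction $S_n\to A_n$, which fuses each conjugate pair of $S_n$-characters into a single $A_n$-character and halves the degree of each self-conjugate character; one must verify that this neither collapses the chosen degrees nor drops the count below $13$.
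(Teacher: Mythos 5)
Your proposal follows essentially the same route as the paper: both reduce $|{\rm cod}(G)|$ to $|\cd(G)|$ using the faithfulness of every nontrivial irreducible character of a simple group, dispose of the sporadic groups and the small alternating groups by a finite check against the Atlas, and bound $|\cd(A_n)|$ from below for large $n$ by exhibiting sufficiently many non-self-conjugate partitions with pairwise distinct degrees. The only substantive difference is the choice of family: the paper uses the thirteen partitions $(n-1,1),(n-2,2),\dots,(n-5,3,2)$, whose degrees are already distinct for all $n\ge 14$, whereas your hook family $[n-k,1^k]$ on its own pushes the threshold up to roughly $n\ge 27$ and therefore leaves a longer finite tail $9\le n\le 26$ to be verified case by case --- your suggested supplement of two-row shapes is exactly what is needed to shorten that tail to the paper's range.
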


\begin{proof}
 We use the classification of finite simple groups. For the sporadic groups by \cite{CCNPW}, with the simple check we see that $G$ is isomorphic to   $M_{11}$ or $M_{12}$ or $M_{22}$ or $M_{23}$ or $J_1$ . For $A_n$ where $ 5 \leq n \leq 13$, by \cite{CCNPW} we see that $G \cong A_5$ or $A_6$ or $A_7$ or $A_8$.  Thus we may assume that $n \geq 14$. Now the characters corresponding to the non-self-associated partitions $(n-1, 1)$, $(n-2, 2)$, $(n-2, 1, 1)$, $(n-3, 3)$,  $(n-3, 2, 1)$, $(n-3, 1, 1, 1)$, $(n-4, 4)$, $(n-4, 3, 1)$, $(n-4, 2, 2 )$, $(n-4, 1, 1, 1, 1)$, $(n-5, 5)$, $(n-5, 4, 1)$, $(n-5, 3, 2)$ have distinct degrees  $n-1$, $\frac{n(n-3)}{2}$, $\frac{(n-1)(n-2)}{2}$, $\frac{n(n-1)(n-5)}{6}$, $\frac{n(n-2)(n-4)}{3}$, $\frac{(n-1)(n-2)(n-3)}{6}$, $\frac{n(n-1)(n-2)(n-7)}{24}$, $\frac{n(n-1)(n-3)(n-6)}{8}$, $\frac{n(n-1)(n-4)(n-5)}{12}$, $\frac{(n-1)(n-2)(n-3)(n-4)}{24}$, $\frac{n(n-1)(n-2)(n-3)(n-9)}{120}$, $\frac{n(n-1)(n-2)(n-4)(n-8)}{24}$, $\frac{n(n-1)(n-2)(n-5)(n-7)}{24}$. 
 
\end{proof}

\begin{theorem}\label{b2}
Suppose that $G$ is a group  and ${\rm cod}(G)={\rm cod}(U_4(2))$. If $N$ is a maximal normal subgroup of $G$ then $G/N \cong  U_4(2)$.

\end{theorem}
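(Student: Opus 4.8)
The plan is to follow the template of Theorem~\ref{a1}: reduce to the case that $S:=G/N$ is a nonabelian simple group, and then show that the constraint ${\rm cod}(S)\subseteq{\rm cod}(G)={\rm cod}(U_4(2))$ forces $S\cong U_4(2)$. Since $N$ is a maximal normal subgroup, $S$ is simple. If $S$ were abelian then $G\neq G'$, and by Proposition~\ref{NC1}(i) the set ${\rm cod}(G/G')\subseteq{\rm cod}(G)$ would contain a prime, namely the codegree of a linear character whose image has prime order; but every nonidentity element of ${\rm cod}(U_4(2))$ is a composite $\{2,3,5\}$-number, so this is impossible. Hence $S$ is a nonabelian simple group.

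The linchpin is a codegree analogue of the usual degree bookkeeping for simple groups. Writing $\pi(X)$ for the set of prime divisors of $|X|$, I claim that for a nonabelian simple group $S$ one has both $|{\rm cd}(S)|=|{\rm cod}(S)|$ and $\pi(S)=\pi({\rm cod}(S))$. Indeed, every nontrivial $\chi\in\Irr(S)$ has trivial kernel, so ${\rm cod}(\chi)=|S|/\chi(1)$, whence $\chi(1)\mapsto|S|/\chi(1)$ is a bijection between the nonidentity degrees and the nonidentity codegrees; and for each prime $p\mid|S|$ the It\^o--Michler theorem yields a nontrivial $\chi$ with $p\nmid\chi(1)$ (otherwise $S$ would have a normal abelian Sylow $p$-subgroup), so that the full $p$-part $|S|_p$ divides ${\rm cod}(\chi)$ and thus $p\in\pi({\rm cod}(S))$. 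Applying this to $S=G/N$ together with ${\rm cod}(S)\subseteq{\rm cod}(U_4(2))$, all of whose members involve only the primes $2,3,5$, I obtain $\pi(S)\subseteq\{2,3,5\}$.

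Next I would invoke the classification of finite simple groups to list the nonabelian simple $\{2,3,5\}$-groups: these are exactly $A_5$, $A_6$, and $U_4(2)\ (\cong S_4(3))$. For the alternating and sporadic possibilities this is the content of Lemma~\ref{b1}: among the groups appearing there only $A_5$ and $A_6$ have order divisible by no prime beyond $5$ (every other entry introduces $7$, $11$, $19$, or $23$), and no sporadic group is a $\{2,3,5\}$-group. For the groups of Lie type one argues exactly as in the case analysis of Theorem~\ref{a1} through Proposition~\ref{d1}, checking that every family other than $U_4(2)$ forces a prime divisor $\geq 7$. It then remains to eliminate $A_5$ and $A_6$, and a direct computation gives ${\rm cod}(A_5)=\{1,12,15,20\}$ and ${\rm cod}(A_6)=\{1,36,40,45,72\}$; since $12\notin{\rm cod}(U_4(2))$ and $36\notin{\rm cod}(U_4(2))$, neither set is contained in ${\rm cod}(U_4(2))$. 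This leaves $S\cong U_4(2)$, as required.

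I expect the main obstacle to be the classification step, namely verifying that $A_5$, $A_6$, and $U_4(2)$ exhaust the nonabelian simple groups whose prime divisors all lie in $\{2,3,5\}$, and organising the Lie-type elimination so that Proposition~\ref{d1} and Lemma~\ref{b1} jointly cover every group rather than only those with $|{\rm cd}|\leq 7$ or the alternating and sporadic ones. A more elementary alternative, closer in spirit to Theorem~\ref{a1}, is to argue purely by cardinality: from $|{\rm cd}(S)|=|{\rm cod}(S)|\leq|{\rm cod}(U_4(2))|=13$ one splits off the top case $|{\rm cod}(S)|=13$, where equality of the sets ${\rm cod}(S)={\rm cod}(U_4(2))$ lets Huppert's conjecture conclude $S\cong U_4(2)$. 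However, that route must still dispose of the groups of Lie type with $8\leq|{\rm cd}|\leq 12$, which fall outside the range of Proposition~\ref{d1}, and closing that gap is precisely where the substantive work lies; the prime-set argument above sidesteps it.
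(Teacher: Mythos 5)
Your proposal is correct, but it takes a genuinely different route from the paper. The paper's proof is a case-by-case sweep over the classification: it splits off $|{\rm cod}(G/N)|=13$ via Huppert's conjecture, handles alternating and sporadic groups through Lemma~\ref{b1}, and then works through every family of Lie type ($^2B_2$, $G_2$, $^2G_2$, $^3D_4$, the large exceptional types, and each classical series $A_n$, $^2A_n$, $B_n/C_n$, $D_n$, $^2D_n$) by exhibiting an explicit character whose codegree cannot lie in ${\rm cod}(U_4(2))$, with $U_4(2)\cong S_4(3)$ emerging from the $C_2$, $q=3$ case. Your argument instead extracts the arithmetic constraint $\pi(G/N)=\pi({\rm cod}(G/N))\subseteq\{2,3,5\}$ and then quotes the classification of nonabelian simple $\{2,3,5\}$-groups; since Burnside's $p^aq^b$ theorem forces $|\pi(G/N)|\geq 3$, this is exactly the sublist $A_5$, $A_6$, $U_4(2)$ of the $K_3$-groups already listed at the start of Section~3, so you need not redo any Lie-type analysis at all. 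Eliminating $A_5$ and $A_6$ by computing ${\rm cod}(A_5)=\{1,12,15,20\}$ and ${\rm cod}(A_6)=\{1,36,40,45,72\}$ is correct. What your approach buys is a much shorter and more robust proof that sidesteps both the $8\leq|{\rm cd}|\leq 12$ gap you rightly identify in the cardinality route and all of the explicit degree formulas; what the paper's approach buys is independence from the prime-set reduction and a template that generalizes to target groups whose codegree sets do not pin down the prime spectrum so tightly.

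One repair is needed in your key lemma: the existence, for each prime $p$ dividing $|S|$, of a nontrivial $\chi\in\Irr(S)$ with $p\nmid\chi(1)$ is not what the It\^o--Michler theorem gives (that theorem concerns the situation where \emph{no} degree is divisible by $p$, which is the opposite hypothesis). The fact you need follows either from Thompson's normal $p$-complement theorem (if $p$ divided every nonlinear degree, the simple group $S$ would have a normal $p$-complement) or, more elementarily, from $|S|=\sum_{\chi}\chi(1)^2\equiv 1 \pmod p$ under that hypothesis, contradicting $p\mid|S|$. With that citation corrected, the argument is complete.
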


\begin{proof}
By our assumption $G/N$ is a  nonabelian simple group and so $|{\rm cd}(G/N)| \geq 4$. We know that  ${\rm cod}(G/N) \subseteq {\rm cod}(G)$ and ${\rm cod}(G)={\rm cod}(U_4(2))$ and so  $4 \leq |{\rm cod}(G/N)| \leq 13$.  
if $|{\rm cod}(G/N)|=13$ then $|{\rm cd}(G/N)|=|{\rm cd}(U_4(2))|$ and by Huppert's conjecture $G/N \cong U_4(2)$.  Thus we may suppose that  $4 \leq |{\rm cd}(G/N)| \leq 12$.

If $G/N$ is alternating group $A_n$ or sporadic group then by Lemma \ref{b1} $G/N$  is isomorphic to    $A_5$ or $A_6$ or $A_7$ or $A_8$ or $M_{11}$ or $M_{12}$ or $M_{22}$ or $M_{23}$ or $J_1$. By \cite{CCNPW} we see ${\rm cod}(G/N) \nsubseteq {\rm cod}(U_4(2))$, a contradiction. In what follows, we assume that $G/N$ is a simple group of Lie type.\\
  {\it Exceptional groups of Lie type}. Let $G/N$ be one of the groups $^3D_4(q)$, $G_2(q)$, $F_4(q)$, $^2B_2(q^2) (q^2=2^{2f+1})$, $E_6(q)$, $^2E_6(q)$, $E_7(q)$, $E_8(q)$, $^2G_2(q) (q=3^{2f+1})$ and $^2F_4(q) (q=2^{2f+1})$. According to \cite[Section 13.9]{C}, we see that $|{\rm cod}(G/N)| \geq 14$, where $G/N$ is isomorphic to  $F_4(q)$, $E_6(q)$, $^2E_6(q)$, $E_7(q)$, $E_8(q)$ or $^2F_4(q) $. Thus we may suppose that $G/N$ is isomorphic to  $^3D_4(q)$, $G_2(q)$, $^2B_2(q^2)$ or $^2G_2(q)$. 
  
Also   by \cite{DM} We see that $|{\rm cod}(G/N)| \geq 14$, where $G/N$ is isomorphic to $^3D_4(q)$.
If   $G/N=\hskip 0.1cm ^2B_2(2^{2f+1})$, $f \geq 1$, where $q^2=2^{2f+1}$, and $f \geq 1$, then  $|{\rm cd}(G/N)|=6$.   Now with the simple check we can see that ${\rm cod}(^2B_2(2^{2f+1}))\nsubseteq {\rm cod}(U_4(2))$, a contradiction.
 Also if  $G/N$ is isomorphic to    $G_2(q)$ then by \cite[Section 13.9]{C} we see that $G$ has cuspidal character of degree $\frac{q(q+1)^2(q^2+q+1)}{6}$, say $\phi$. Now it is easy to see that ${\rm cod}(\phi)=\frac{6q^5(q^4+q^2+1)(q-1)^2}{q^2+q+1}$. Now we see that ${\rm cod}(\phi) \notin {\rm cod}(U_4(2))$, a contradiction.  Finally if  $G$ is isomorphic to $^2G_2(q)$, then  by \cite{Wa} we see that $G/N$ has  character of degree $(q-1)(q^2-q+1)$, say $\delta$. Now it is easy to see that ${\rm cod}(\delta)=q^3(q+1)$. No we see that ${\rm cod}(\delta) \notin {\rm cod}(U_4(2))$, a contradiction.\\
{\it Type $A_n$: Linear groups.} Therefore $G/N \cong L_{n+1}(q)$, where $q$ is a prime power of $p$ and $n \geq 1$. If $n=1$ then $G/N \cong L_2(q)$. If $q=2^f$ where $f \geq 2$ then  it is easy to see that ${\rm cod(PSL(2, q))} \nsubseteq {\rm cod}(U_4(2))$. If $q=p^f$, where $p \neq 2$ and $q > 5$, then again  it is easy to see that ${\rm cod(PSL(2, q))} \nsubseteq {\rm cod}(U_4(2))$, a contradiction. Thus we may suppose $n \geq 2$,  then $G/N$ has a  character of degree $\frac{q(q^n-1)}{q-1}$, say $\chi$. Now ${\rm cod}(\chi)=q^{\frac{n^2+n-2}{2}}\Pi_{i=1}^{n}\frac{(q^{i+1}-1)(q-1)}{(n+1, q-1)(q^n-1)}$ and ${\rm cod}(\chi) \notin {\rm cod}(U_4(2))$,  a contradiction. \\
{\it Type $^2A_n$: Unitary groups.}Therefore $G/N \cong U_{n+1}(q)$, where $n \geq 2$. Now by \cite{HZ}, $G/N$ has a  character of degree $\frac{q(q^n-(-1)^n)}{q+1}$, say $\chi$. Now ${\rm cod}(\chi)=q^{\frac{n^2+n-2}{2}}\Pi_{i=1}^{n} \frac{(q^{i+1}-(-1)^{i+1})(q+1)}{(n+1, q+1)(q^n-(-1)^n)}$ and ${\rm cod}(\chi) \notin {\rm cod}(U_4(2))$,  a contradiction. \\
{\it Type $‌‌‌B_n$ or $C_n$: Odd dimensional orthogonal groups and Symplectic groups.} Therefore $G/N \cong S_{2n}(q)$ or $O_{2n+1}(q)$, where $n \geq 2$. If $n=2$ then by  \cite[Section 13.8]{C}, $G/N$ has a unipotent character of $\frac{q(q+1)^2}{2}$, say $\chi$. Now ${\rm cod}(\chi)=\frac{2q^3(q-1)^2(q^2+1)}{(2, q-1)}$ and ${\rm cod}(\chi) \notin {\rm cod}(U_4(2))$, where $q \neq 3$,  a contradiction.  If $q=3$ then $G/N \cong U_4(2)$, as wanted.  Also if $n>2$ then by \cite[Section 13.8]{C} $G/N$ has a steinberg character of degree $q^{n^2}$, say $\theta$. Now ${\rm cod}(\theta)=\frac{\Pi_{i=1}^n(q^{2i}-1)}{(2, q-1)}$ and  ${\rm cod}(\chi) \notin {\rm cod}(U_4(2))$,  a contradiction. \\
{\it Type $D_n$: Even dimensional orthogonal groups.} Therefore $G/N \cong O_{2n}^{+}(q)$, where $n \geq 2$. If $n=2$ then $G/N \cong {\rm L_2(q)} \times {\rm L_2(q)}$. Suppose that  $q=2^f$. Then ${\rm cod}(\theta)=q^2(q^2-1)$,  where $f \geq 2$ and $ \theta \in {\Irr}({\rm L_2(q)} \times {\rm L_2(q)})$. Now suppose that  $q=p^f$, where $p \neq 2$ and $p^f > 5$. Then ${\rm cod}(\theta)=\frac{q^2(q^2-1)}{4}$, where   $ \theta \in {\Irr}({\rm L_2(q)} \times {\rm L_2(q)})$.
In each case we see that ${\rm cod}(\theta) \notin {\rm cod}(U_4(2))$, a contradiction. 

 Also if $n=3$ then $G/N \cong L_4(q)$ which is isomorphic to type $A_3$. Thus we may suppose that $n \geq 4$. Now  by \cite{HZ}, $G/N$ has an irreducible character of degree  $\frac{q(q^{n-2}+1)(q^n-1)}{q^2-1}$, say $\phi$. Now ${\rm cod}(\phi)=\frac{q^{n^2-n-1}\Pi_{i=1}^{n-1}(q^{2i}-1)(q^2-1)}{(4, q^n-1)(q^{n-2}+1)}$ and ${\rm cod}(\phi) \notin {\rm cod}(U_4(2))$,  a contradiction. \\
{\it Type $^2D_n$: Even dimensional orthogonal groups.} Therefore $G/N \cong O_{2n}^{-}(q)$, where $n \geq 2$. If $n=2$ then $G/N \cong {\rm L_2(q^2)}$ which is isomorphic to type $A_1$. Also if $n=3$ then $G/N \cong U_4(q)$ which is isomorphic to type $^2A_3$. Thus we may suppose that $n \geq 4$.
Now  by \cite{HZ},  $G/N$ has an irreducible character of degree  $\frac{q(q^{n-2}-1)(q^n+1)}{q^2-1}$, say $\sigma$. Now ${\rm cod}(\sigma)=\frac{q^{n^2-n-1}\Pi_{i=1}^{n-1}(q^{2i}-1)(q^2-1)}{(4, q^n+1)(q^{n-2}-1)}$ and ${\rm cod}(\sigma) \notin {\rm cod}(U_4(2))$,  a contradiction. \\\\

\end{proof}

\begin{theorem}\label{c2}
Suppose that $G$ is a  finite group  and  ${\rm cod}(G)={\rm cod}(U_3(3))$ or ${\rm cod}(G)={\rm cod}(U_4(2))$. Then $G \cong  U_3(3)$ or $G \cong U_4(2)$.

\end{theorem}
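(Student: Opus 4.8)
The plan is to reduce Theorem~\ref{c2} to the two structural results already established, namely Theorem~\ref{a1} and Theorem~\ref{b2}. These theorems tell us that if $\mathrm{cod}(G)=\mathrm{cod}(U_3(3))$ (respectively $\mathrm{cod}(U_4(2))$) and $N$ is a maximal normal subgroup of $G$, then the simple quotient $G/N$ is isomorphic to $U_3(3)$ (respectively $U_4(2)$). The remaining work is therefore to upgrade this isomorphism of the \emph{quotient} to an isomorphism of $G$ itself; that is, to prove that the maximal normal subgroup $N$ is trivial. This is the step I expect to be the main obstacle, since nothing so far rules out a nontrivial kernel sitting below the simple quotient.

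First I would fix $S\in\{U_3(3),U_4(2)\}$, set $S=G/N$, and argue by way of contradiction that $N\neq 1$. Let $M$ be a minimal normal subgroup of $G$ contained in $N$. The strategy is to produce, via the extension machinery of Propositions~\ref{f1} and~\ref{f2}, an irreducible character of $G$ whose codegree is strictly larger than the maximum codegree appearing in $\mathrm{cod}(S)$, contradicting $\mathrm{cod}(G)=\mathrm{cod}(S)$. Concretely, I would split into the two standard cases according to whether $M$ is elementary abelian ($p$-group) or a direct product of non-abelian simple groups.

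In the non-abelian case, write $M=S_1\times\cdots\times S_t$ with $S_i\cong T$ simple. By Proposition~\ref{f1} there is a nontrivial $\theta\in\mathrm{Irr}(T)$ extending to $\mathrm{Aut}(T)$, and by Proposition~\ref{f2} the character $\theta\times\cdots\times\theta\in\mathrm{Irr}(M)$ extends to some $\chi\in\mathrm{Irr}(G)$. Then $\mathrm{cod}(\chi)$ is divisible by the order of $S=G/N$ while the kernel avoids $M$, so $\mathrm{cod}(\chi)$ strictly exceeds every element of $\mathrm{cod}(S)$; by Proposition~\ref{NC1}(ii) one also checks the constituent codegrees divide $\mathrm{cod}(\chi)$, forcing a prime or prime-power divisor pattern incompatible with the explicit list of $\mathrm{cod}(U_3(3))$ or $\mathrm{cod}(U_4(2))$. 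In the abelian case $M$ is an elementary abelian $p$-group, and I would examine $\mathrm{Irr}(G\mid M)$: any $\chi\in\mathrm{Irr}(G\mid M)$ has $M\not\leq\ker(\chi)$, so $\mathrm{cod}(\chi)=|G:\ker(\chi)|/\chi(1)$ carries the full factor $|S|$ together with the $p$-contribution from $M$, again producing a codegree too large for the known list.

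The crux, and the step I expect to demand the most care, is ensuring the manufactured codegree genuinely falls outside the finite explicit sets recorded in the Remark following Proposition~\ref{d1}. The cleanest uniform argument uses Proposition~\ref{NC1}(i): since codegrees are invariant under passing to quotients and $\mathrm{cod}(G/N)=\mathrm{cod}(S)=\mathrm{cod}(G)$, every codegree of $G$ already appears as a codegree of $S$; but a character in $\mathrm{Irr}(G\mid M)$ cannot be inflated from $G/M$ and its codegree is strictly divisible by $|S|$ or by a new prime power, so it cannot lie in $\mathrm{cod}(S)$ unless $M=1$. Iterating up the chief series from $N$ down to $1$ then yields $N=1$ and hence $G\cong S$, completing the proof for both $U_3(3)$ and $U_4(2)$ simultaneously.
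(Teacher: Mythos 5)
Your overall strategy (reduce to Theorems~\ref{a1} and~\ref{b2}, then show the maximal normal subgroup is trivial by comparing codegrees, splitting on whether the minimal normal subgroup is abelian) is the same as the paper's, but the execution has a genuine gap in the abelian case. Your key claim there --- that any $\chi\in{\rm Irr}(G\mid M)$ has codegree ``carrying the full factor $|S|$'' and hence too large --- is not true as stated: ${\rm cod}(\chi)=|G:\ker(\chi)|/\chi(1)$ is a \emph{divisor} of $|G:\ker(\chi)|$, not a multiple of $|S|$, and $\chi(1)$ can be large enough to bring the quotient back into the admissible range. The central extension $2.U_4(2)$ is exactly the configuration your heuristic fails to exclude: there $N\leq Z(G)$, faithful characters have ${\rm cod}(\chi)=2|U_4(2)|/\chi(1)$, and one must actually consult ${\rm cd}(2.U_4(2))=\{1,4,20,36,60,64,80\}$ to get a contradiction. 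The paper accordingly splits the abelian case into $C_G(N)=G$ (handled via perfectness of $G$, the Schur multipliers ${\rm Mult}(U_3(3))=1$ and ${\rm Mult}(U_4(2))\cong\mathbb{Z}_2$, and the explicit degree list just quoted) and $C_G(N)=N$ (handled by showing every $\theta\in{\rm Irr}(G\mid N)$ is faithful, running a Clifford-theoretic inertia-group argument to get $|N|\mid |G/N|$, and combining this with $|G/N|\mid|{\rm Aut}(N)|$ and a case analysis on the possible elementary abelian $N$). None of this machinery appears in your outline, and without it the abelian case does not close.

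Two smaller points. In the non-abelian case your divisibility argument ($|S|$ divides ${\rm cod}(\chi)$, which exceeds every element of ${\rm cod}(S)$) only works when the extension $\chi$ is faithful; the paper must separately dispose of the possibility $G=\ker(\chi)\times N$, which your phrase ``the kernel avoids $M$'' does not address. Also, you work with a minimal normal subgroup $M$ of $G$ inside $N$ and defer the rest to an unspecified ``iteration up the chief series''; the paper instead takes a minimal counterexample and shows that its maximal normal subgroup $N$ is itself minimal normal, which is what makes the dichotomies $\ker(\theta)N\in\{N,G\}$ and $C_G(N)\in\{N,G\}$ available. You would need to supply an analogous reduction before your case division is usable.
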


\begin{proof}
Suppose that $N$ is the maximal normal subgroup of $G$. By Theorem~\ref{a1} and Theorem~\ref{b2}, $G/N \cong  U_3(3)$ or $G/N \cong  U_4(2)$. We show that $N=1$. Suppose to the contrary that $G$ is a counterexample with minimal order. Thus there is no groups with order less than $G$, say $H$, such that has a maximal normal subgroup, say $M$, such that $H/M \cong  U_3(3)$ or $U_4(2)$ and $M \neq 1$. Assume that  $H \unlhd G$, where $H \neq 1$ and  $H<N$. Now $G/N \cong (G/H)/(N/H)$ and so ${\rm cod (G/N)} \subseteq {\rm cod (G/H)}\subseteq {\rm cod (G)}$. Thus ${\rm cod (G/H)}={\rm cod (G)}$ and so by minimality of $G$ we get a contradiction. Thus $N$ is also a minimal normal subgroup of $G$.  If $N$ is not  abelian then  $N \cong T \times T \times \cdots T=T^n$, where $T$ is a non-abelian simple group. By Propositions \ref{f1}, \ref{f2}, there exits $1_N \neq \phi \in {\rm Irr}(N)$ which extends to some irreducible character of $G$, say $\theta$. We have ${\rm ker}(\theta)N \unlhd G$ and by the maximality of $N$ we have that either ${\rm ker}(\theta)N=N$ or ${\rm ker}(\theta)N=G$. If ${\rm ker}(\theta)N=N$ then ${\rm ker}(\theta) \subseteq N$. Since $N$ is a minimal normal subgroup we have either ${\rm ker}(\theta)=1$ or ${\rm ker}(\theta)=N$. We know that $\theta_N \neq 1$ and so we ${\rm ker}(\theta)=1$ or $G={\rm ker}(\theta) \times N$. If  ${\rm ker}(\theta)=1$ then ${\rm cod }(\theta)=|G|/\theta(1)$. Since $\theta(1) \mid |N|$, it implies that  $|N|=m\theta(1)$ for some positive integer $m$. Thus $|G|/|N|=|G|/m\theta(1)$ and so $m|G|/|N|=|G|/\theta(1)$.  Therefore $|G|/|N| \mid |G|/\theta(1)={\rm cod }(\theta)$. We know that ${\rm cod }(\theta) \in {\rm cod }(G)$ and ${\rm cod }(G)={\rm cod }(U_3(3))$ or ${\rm cod }(G)={\rm cod }(U_4(2))$. We know that $|G/N|=|U_3(3)|=2^5.3^3.7$ and $|G/N|=|U_4(2)|=2^6.3^4.5$. Now we get contradition easily.  Thus we may suppose that $G={\rm ker}(\theta) \times N$. Since $N$ is the minimal normal subgroup of $G$, it follows that ${\rm ker}(\theta)$ is the maximal normal subgroup of $G$. Now by Theorem \ref{a1} and Theorem \ref{b2} we see that $G/{\rm ker}(\theta) \cong U_3(3)$ or $G/{\rm ker}(\theta) \cong U_4(2)$. Also we have $N \cong G/{\rm ker}(\theta)$. Therefore $N \cong {\rm ker}(\theta) \cong G/{\rm ker}(\theta)$. We know that ${\rm cod }(G/{\rm ker}(\theta)) \subseteq {\rm cod }(G)$ and $ \theta \in {\rm Irr}(G/{\rm ker}(\theta))$. Now ${\rm cod}(\theta)=|G|/\theta(1)=|N|^2/\theta(1)$. Since $\theta(1) \mid |N|$ and ${\rm cod}(\theta) \in {\rm cod}(G)$ we have a contradiction.  Thus $N$ is an  abelian group. We know that  $N \leq C_G(N)$ and $C_G(N) \unlhd G$. Now since $N$ is the maximal normal subgroup of $G$ we conclude that $N=C_G(N)$ or $C_G(N)=G$. If  $C_G(N)=G$ then $N \leq Z(G)$. On the other hand if $G^{'}<G$ then by ${\rm cod}(G/G^{'}) \subseteq {\rm cod}(G)$, where $G \cong U_3(3)$ or $U_4(2)$, we see that ${\rm cod}(G)$ contains a prime, a contradiction. Thus $G$ is perfect and  $N \cap G^{'}=N$. Thus  $N \leq G^{'}$ and so $N\leq  Z(G) \cap G^{'}$.  Now since $G/N \cong U_3(3)$ or $U_4(2)$ we see thet $G$ is a central extension of $U_3(3)$ or $U_4(2)$. Also by \cite{CCNPW}, we know that ${\rm Mult}(U_3(3))=1$ and ${\rm Mult}(U_4(2)) \cong \mathbb{Z}_2$.  Thus $G \cong U_3(3)$ or $G \cong U_4(2)$ or $G \cong 2.U_4(2)$. Also by \cite{CCNPW}, we know that ${\rm cd}(2.U_4(2))=\{1, 4, 20, 36, 60, 64, 80\}$ and we have a contradiction.  Thus $G \cong U_3(3)$ or $G \cong U_4(2)$ and the assertion is hold. Therefore we may assume that $N=C_G(N)$. Now suppose that $ \theta \in {\rm Irr}(G|N)$. We show that $\theta$ is faithful. If ${\rm ker}(\theta)\neq 1$ then we know that $N \unlhd {\rm ker}(\theta)N \unlhd G$ and so by our assumption about $N$ we see that either $N ={\rm ker}(\theta)N$ or ${\rm ker}(\theta)N=G$. First suppose that ${\rm ker}(\theta)N=G$. Now ${\rm ker}(\theta) \cap N \subseteq N$ and by our assumption about $N$ we see that ${\rm ker}(\theta) \cap N =1$ or ${\rm ker}(\theta) \cap N=N$. If ${\rm ker}(\theta) \cap N=N$ then $N \subseteq {\rm ker}(\theta)  \subseteq G$, a contradiction. Thus ${\rm ker}(\theta) \cap N =1$. Thus $G={\rm ker}(\theta) \times N$. Now by considering  $G=G^{'}$ we get a contradiction. Therefore ${\rm ker}(\theta)  \leq N$ and so by the minimality of $N$ we have ${\rm ker}(\theta)=N$, a contradiction. Thus  $\theta$ is faithful for each $\theta \in {\rm Irr}(G|N)$ as asserted. Now we show that if $1_N \neq \lambda \in {\rm Irr}(N)$ then we have $|I_G(\lambda)|/\theta(1) \in {\rm cod(G)}$ for all $\theta \in {\rm Irr}(I_G (\lambda)| \lambda)$. By \cite[Theorem 6.11]{Isa}, we see that $\theta^G \in {\rm Irr}(G)$ for all $\theta \in {\rm Irr}(I_G (\lambda)| \lambda)$. If $N \leq {\rm ker}(\theta)$ then for each $n \in N$ we have $\theta(n)=\theta(1)$  and  by \cite[Theorem 6.2]{Isa} we have $\theta_N(1)=et\theta(1)$ where $t=|G:I_G(\theta)|$ and $e$ divides $|G:H|$. Now it is easy to see that for each $n \in N$ we have $\lambda(n)=\lambda(1)$, a contradiction. Thus we have $N \nleq {\rm ker}(\theta)$ and  $\theta \in {\rm Irr}(G|N)$. Now as we showed that  we see that ${\rm ker}(\theta ^G)=1$. Thus we have ${\rm cod}(\theta ^G) \in {\rm cod}(G)$, as desired. We have ${\rm cod}(G/N)={\rm cod}(G)$ and $|I_G(\lambda)|/\theta(1) \in {\rm cod}(G)$. Therefore $|I_G(\lambda)||N|/|N|\theta(1) \in {\rm cod}(G)$. By \cite[Theorem 6.15]{Isa} we have $\theta(1) \mid |I_G(\lambda)/N|$. Thus $|N|$ divides the codegree of some irreducible character of $G/N$, and so $|N| \mid |G/N|$.  

We know that $C_G(N)=N$ and so $|G/N| \mid |{\rm Aut}(N)|$.   Also we know that $N$ is an elementary abelian $r$-group. If $G/N \cong U_3(3)$ then $r \in \{2, 3, 7\}$. Thus  $|{\rm Aut}(N)|$  divides  $|{\rm GL}(5, 2)|$  or $|{\rm GL}(3, 3)|$ or $|\mathbb{Z}_6|$. Now since $|N| \mid |G/N|$ we have a contradiction. Also if $G \cong U_4(2)$ then 
  $r \in \{2, 3, 5\}$.  If $|N| \mid 2^5$ or $|N| \mid 3^3$ or $|N|=5$ then  $|{\rm Aut}(N)|$  divides  $|{\rm GL}(5, 2)|$  or $|{\rm GL}(3, 3)|$ or $|\mathbb{Z}_4|$, a contradiction.  Thus we may suppose that $|N|=2^6$ or $|N|=3^4$. If $|N|=2^6$ then ${\rm Aut(N)} \cong {\rm GL(6,2)}$. Now it is easy to see that ${\rm cd}(U_4(2) \nsubseteq {\rm cd}({\rm GL}(6, 2))$, a contradiction. Also if  $|N|=3^4$ then ${\rm Aut(N)} \cong {\rm GL(4,3)}$ and again ${\rm cd}(U_4(2)) \nsubseteq {\rm cd}({\rm GL}(4, 3))$, a contradiction.
\end{proof}

\end{document}